\documentclass[12pt]{amsart}
\usepackage{amsmath,amssymb,amsbsy,amsfonts,amsthm,latexsym,mathabx,
            amsopn,amstext,amsxtra,euscript,amscd,stmaryrd,mathrsfs,
            cite,array,mathtools,enumerate}

\usepackage{url}
\usepackage[colorlinks,linkcolor=blue,anchorcolor=blue,citecolor=blue,backref=page]{hyperref}

\usepackage{color}

\usepackage[norefs,nocites]{refcheck}

\usepackage{color}
\usepackage{float}

\hypersetup{breaklinks=true}

\usepackage[english]{babel}

\usepackage{enumitem}

\begin{document}

\newtheorem{theorem}{Theorem}
\newtheorem{lemma}[theorem]{Lemma}
\newtheorem{claim}[theorem]{Claim}
\newtheorem{cor}[theorem]{Corollary}
\newtheorem{prop}[theorem]{Proposition}
\newtheorem{definition}{Definition}
\newtheorem{question}[theorem]{Open Question}
\newtheorem{example}[theorem]{Example}

\numberwithin{equation}{section}
\numberwithin{theorem}{section}

 \newcommand{\F}{\mathbb{F}}
\newcommand{\K}{\mathbb{K}}
\newcommand{\D}[1]{D\(#1\)}
\def\scr{\scriptstyle}
\def\\{\cr}
\def\({\left(}
\def\){\right)}
\def\[{\left[}
\def\]{\right]}
\def\<{\langle}
\def\>{\rangle}
\def\fl#1{\left\lfloor#1\right\rfloor}
\def\rf#1{\left\lceil#1\right\rceil}
\def\le{\leqslant}
\def\ge{\geqslant}
\def\eps{\varepsilon}
\def\mand{\qquad\mbox{and}\qquad}

\def \e{\mathbf{e}}

\newcommand{\commI}[1]{\marginpar{%
\begin{color}{red}
\vskip-\baselineskip 
\raggedright\footnotesize
\itshape\hrule \smallskip I: #1\par\smallskip\hrule\end{color}}}

\newcommand{\commL}[1]{\marginpar{%
\begin{color}{blue}
\vskip-\baselineskip 
\raggedright\footnotesize
\itshape\hrule \smallskip L: #1\par\smallskip\hrule\end{color}}}

\newcommand{\Fq}{\mathbb{F}_q}
\newcommand{\Fp}{\mathbb{F}_p}
\newcommand{\Disc}[1]{\mathrm{Disc}\(#1\)}
\newcommand{\Res}[1]{\mathrm{Res}\(#1\)}
\newcommand{\QQ}{\mathbb{Q}}
\newcommand{\ZZ}{\mathbb{Z}}

\newcommand{\p}{\mathfrak{p}}

\renewcommand{\Re}{\mathrm{Re}}
\def\GL{\mathrm{GL}}
\def\cA{{\mathcal A}}
\def\cB{{\mathcal B}}
\def\cC{{\mathcal C}}
\def\cD{{\mathcal D}}
\def\cE{{\mathcal E}}
\def\cF{{\mathcal F}}
\def\cG{{\mathcal G}}
\def\cH{{\mathcal H}}
\def\cI{{\mathcal I}}
\def\cJ{{\mathcal J}}
\def\cK{{\mathcal K}}
\def\cL{{\mathcal L}}
\def\cM{{\mathcal M}}
\def\cN{{\mathcal N}}
\def\cO{{\mathcal O}}
\def\cP{{\mathcal P}}
\def\cQ{{\mathcal Q}}
\def\cR{{\mathcal R}}
\def\cS{{\mathcal S}}
\def\cT{{\mathcal T}}
\def\cU{{\mathcal U}}
\def\cV{{\mathcal V}}
\def\cW{{\mathcal W}}
\def\cX{{\mathcal X}}
\def\cY{{\mathcal Y}}
\def\cZ{{\mathcal Z}}

\def\Q{\mathbb{Q}}
\def\Z{\mathbb{Z}}

\newcommand{\sR}{\ensuremath{\mathscr{R}}}
\newcommand{\sDI}{\ensuremath{\mathscr{DI}}}
\newcommand{\DI}{\ensuremath{\mathrm{DI}}}

\newcommand{\Nm}[1]{\mathrm{Norm}_{\,\F_{q^k}/\Fq}(#1)}

\newcommand{\Tr}[1]{\mathrm{Tr}\(#1\)}
\newcommand{\rad}[1]{\mathrm{rad}(#1)}

\newcommand{\Orb}[1]{\mathrm{Orb}\(#1\)}
\newcommand{\aOrb}[1]{\overline{\mathrm{Orb}}\(#1\)}

\renewcommand{\v}{\mathrm{v}}

\title[Distribution  of inversive generator]
{Distribution of short subsequences of inversive congruential pseudorandom 
numbers modulo $2^t$}

 \author[L. M{\'e}rai]{L{\'a}szl{\'o} M{\'e}rai}
\address{L.M.: Johann Radon Institute for
Computational and Applied Mathematics,
Austrian Academy of Sciences, Altenberger Stra\ss e 69, A-4040 Linz, Austria}
\email{laszlo.merai@oeaw.ac.at}

\author[I.~E.~Shparlinski]{Igor E. Shparlinski}
\address{I.E.S.: School of Mathematics and Statistics, University of New South Wales.
Sydney, NSW 2052, Australia}
\email{igor.shparlinski@unsw.edu.au}

\begin{abstract}
 In this paper we study the distribution of very short sequences  of inversive congruential pseudorandom numbers
 modulo   $2^t$. We derive a new bound on exponential sums with such sequences and  use it to  estimate  their discrepancy.
 The technique we use is based on the method of N.~M.~Korobov (1972) of estimating double Weyl sums and a fully explicit 
 form of the Vinogradov  mean value theorem due to K.~Ford (2002), which has never been used in this area and is very likely to
 find further applications. 
 \end{abstract}

\keywords{Inversive congruential pseudorandom numbers, prime powers, exponential sums, Vinogradov  mean value theorem}
\subjclass[2010]{11K38, 11K45, 11L07}

\pagenumbering{arabic}

\maketitle

\section{Introduction}

\subsection{Background on the M{\"o}bius tranformation}

Let $t\geq 3$ be an integer and write $\cU_t= \cR_t^*$ for the group of units of the residue ring $\cR_t = \Z/2^t\Z$ modulo $2^t$. 
Then $\#\cU_t=2^{t-1}$. It is often be convenient to identify elements of $\cR_t$ with the corresponding elements of the least residue system modulo $2^t$.

We fix a matrix 
$$
M=\begin{pmatrix} m_{11} & m_{12}\\ m_{21} & m_{22} \end{pmatrix} \in \GL_2(\cR_t)
$$ 
with
\begin{equation}
\label{eq:Matr M}
M 
\equiv \begin{pmatrix} 1 & 0 \\ 0 &1 \end{pmatrix} 
\quad
\text{or}
\quad
\begin{pmatrix} 0 & 1 \\ 1 &0 \end{pmatrix}
\mod 2.
\end{equation}

We then consider sequences  generated by iterations of the
{\it M{\"o}bius tranformation\/}
\begin{equation}
\label{eq:MobMap}
M: \ x \mapsto \frac{m_{11} x+ m_{12}}{m_{21} x + m_{22}}
\end{equation}
which, under the condition~\eqref{eq:Matr M},  is always defined over   $\cU_t$, that is, $M: \cU_t \to \cU_t$.

That is  for $u_0\in \cR_t$ we consider the trajectory 
\begin{equation}
\label{eq:Traj}
u_{n} =M\(u_{n-1}\) = M^n\(u_0\),
 \qquad n =  1,2,
\ldots\,,
\end{equation}
generated  by iterations of the M{\"o}bius tranformation~\eqref{eq:MobMap} associated with $M$. 

Assume that the characteristic polynomial of $M$ has two  {\it distinct\/} 
 eigenvalues $\vartheta_1$ and  $\vartheta_2$ from the algebraic closure $\overline \Q_2$ 
 of the field of $2$-adic fractions $\Q_2$.

It is not difficult to prove by induction on $n$ that there is an explicit 
representation of the form
\begin{equation}\label{eq:seq gen}
 u_n= \frac{\gamma_{11} \vartheta_1^n + \gamma_{12} \vartheta_2^n}
 {\gamma_{21} \vartheta_1^n + \gamma_{22} \vartheta_2^n}
\end{equation}
with some coefficients  $\gamma_{ij} \in \overline \Q_2$, $i,j =1,2$.

Here we consider the split case when the eigenvalues  $\vartheta_1, \vartheta_2 \in \Z_2$
are $2$-adic integers, in which case, interpolating,  we also have  $\gamma_{ij} \in \Z_2$, $i,j=1,2$.

It is easy to see that in this case we can assume that  
$$
\gamma_{21} \equiv 1 \bmod 2 \mand  \gamma_{22}\equiv 0 \bmod 2. 
$$ 
Then, defining $g \in \cU_t$ by the equation
$$
g = \vartheta_1/ \vartheta_2
$$
we have $g\in \cR_t$ (recall that $M$ is invertible in $\cR_2$), thus the sequence generated by~\eqref{eq:Traj}, 
the representation~\eqref{eq:seq gen} has the form 
\begin{equation}\label{eq:seq}
 u_n=\frac{a}{g^n-b}+c
\end{equation}
with some  coefficients $a,b,c \in \cR_t$.
Furthermore, it is also easy to see that 
$$
b \equiv 0 \bmod 2. 
$$

\subsection{Motivation}
The sequences~\eqref{eq:Traj} are interesting in their own rights but they have also been 
used as a source of pseudorandom number generation where this sequence is known  as
the {\it inversive generator\/}, for example, see~\cite{Chou} for the period length and~\cite{NiedWint}
for distributional properties. 

More precisely, let $\tau$ be the  multiplicative order of $g$ modulo $2^t$. 
Then $(u_n)$ is a periodic sequence with period length $\tau$, provided that $a$ is odd.

Niederreiter and Winterhof~\cite{NiedWint}, extending the results
of~\cite{NiedShp} from odd prime powers to powers of $2$, obtained nontrivial results for
segments of these sequences of length $N$ 
satisfying 
\begin{equation}
\label{eq:NW range}
\tau \ge N \ge 2^{(1/2 + \eta) t}
\end{equation}
for any fixed $\eta>0$ and sufficiently large $t$.    

Here using very different techniques we  significantly reduce the range \eqref{eq:NW range}
and obtain results which are nontrivial for much shorter segments, namely, 
for
\begin{equation}
\label{eq:MS range}
\tau \ge N \ge 2^{ct^{2/3}}
\end{equation}
for some absolute constant $c>0$.

We also consider this as an opportunity to introduce new techniques into the area of 
pseudorandom number generation which we believe may have more applications 
and  lead to new advances.

\subsection{Our results}

Here we establish upper bounds for the exponential sums
$$
S_h(L,N)=\sum_{n=L}^{L+N-1} \e\(h u_n/2^t \), \qquad 1\leq N\leq \tau, 
$$
where, as usual, we denote $\e(z) = \exp(2\pi i z)$ and, 
as before,  $\tau$ is the  multiplicative order of $g$ modulo $2^t$.

Using the method of Korobov~\cite{Kor} together with  the use of the  Vinogradov  mean value theorem in
the explicit form given by Ford~\cite{Ford}, 
we can estimate $S_h(L,N)$ for the values $N$ in the range~\eqref{eq:MS range}. 

Throughout the paper we always use the parameter 
\begin{equation}
\label{eq:rho}
\rho = \frac{\log N}{t}
\end{equation}
which controls the size of $N$ relative to the modulus $2^t$ on a logarithmic scale.

\begin{theorem}\label{thm:main}
Let $\gcd(g,2)=\gcd(a,2)=1$ and write
$$
g^2=1+w_\beta2^\beta, \quad \gcd(w_\beta,2)=1.
$$
Then for $2^{8\beta} < N \leq \tau$ we have
$$
\left|S_h(L,N) \right| \leq c  N^{1-\eta\rho^2}
$$
where $\rho$ is given by~\eqref{eq:rho}, 
for some absolute constants $c, \eta>0$ uniformly  over all integers $h$ with $\gcd(h,2)=1$.
\end{theorem}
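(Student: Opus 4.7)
The plan is to follow the double-sum method of Korobov~\cite{Kor}, reducing $S_h(L,N)$ to Weyl sums over polynomials of controlled degree modulo $2^t$ and bounding the latter with Ford's explicit form~\cite{Ford} of Vinogradov's mean value theorem. Since $u_n = a/(g^n-b)+c$ with $\gcd(a,2)=1$, the additive constant $c$ contributes only the unimodular factor $\e(hc/2^t)$ and $a$ may be absorbed into $h$, so it suffices to bound
$$
T=\sum_{n=L}^{L+N-1}\e\(h/(2^t(g^n-b))\)
$$
for an odd $h$, using that $g^n-b$ is odd (since $g$ is odd and $b$ even) and hence a unit of $\cR_t$.

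\textbf{Polynomial representation.} With a parameter $\alpha$ to be optimized, set $K=2^\alpha$, $M=\lfloor N/K\rfloor$ and decompose $n=L+m+kK$, $0\le m<K$, $0\le k<M$, losing only $O(K)$ boundary terms. Iterating the hypothesis $g^2=1+w_\beta 2^\beta$ gives $g^K=1+v\cdot 2^{\gamma}$ with $\gamma=\beta+\alpha-1$ and $\gcd(v,2)=1$, so that, with $A_m=g^{L+m}$ and $B_m=A_m-b$ (odd),
$$
g^n-b = B_m + A_m Q(k),\qquad Q(k)=(1+v\,2^{\gamma})^k-1,\ \ 2^{\gamma}\mid Q(k).
$$
Expanding $Q(k)$ by the binomial theorem and inverting via the geometric series $B_m^{-1}\sum_{j\ge 0}(-A_m B_m^{-1}Q(k))^j$, both truncated modulo $2^t$, produces a polynomial identity $1/(g^n-b)\equiv P_m(k)\pmod{2^t}$. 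The crucial observation is that the $k^i$-coefficient of $\binom{k}{i}2^{\gamma i}$ has $2$-adic valuation at least $\gamma i - v_2(i!) \ge (\gamma-1)i+1$, so any monomial $k^s$ in the expansion of $Q(k)^j$ carries $2$-adic weight at least $(\gamma-1)s+j$; only terms with $(\gamma-1)s+j<t$ survive modulo $2^t$ and hence $\deg P_m = D \le t/(\gamma-1)$.

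\textbf{Applying Ford's theorem.} Writing $T = \sum_{m=0}^{K-1}W_m+O(K)$ with $W_m=\sum_{k=0}^{M-1}\e(hP_m(k)/2^t)$, the same valuation count shows that the leading coefficient of $P_m$ has $2$-adic valuation close to its forced lower bound, so that $h$ times it divided by $2^t$ is a reduced fraction with a large power-of-two denominator. This supplies the quantitative Diophantine hypothesis for Ford's explicit version~\cite{Ford} of Vinogradov's mean value theorem, which then yields a Weyl-type estimate
$$
|W_m|\le M^{1-c_1/D^2}
$$
uniformly in $m$ and in odd $h$, for some absolute $c_1>0$.

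\textbf{Optimization and main obstacle.} Summation gives $|T|\le KM^{1-c_1/D^2}\le NM^{-c_1/D^2}$. The hypothesis $N>2^{8\beta}$ lets us take $\alpha\asymp\tfrac{2}{3}\log_2 N$ with $\alpha\gg\beta$, so that $\gamma\asymp\alpha$, $D\asymp t/\gamma\asymp 1/\rho$, and $\log M\asymp\tfrac{1}{3}\log N$; then $c_1\log M/D^2\asymp c_1\rho^2\log N$, giving $|T|\le N^{1-\eta\rho^2}$ for an absolute $\eta>0$. The principal obstacle is the systematic tracking of $2$-adic valuations of the coefficients of $P_m(k)$ through the composition of the binomial expansion and the geometric-series inversion, so as to verify Ford's Diophantine hypothesis uniformly in $m$; it is precisely here that Ford's explicit version, rather than the classical inexplicit Vinogradov bound, becomes indispensable for extracting an absolute $\eta$ in the final exponent.
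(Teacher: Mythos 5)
Your overall strategy (Korobov's method combined with Ford's explicit form of the Vinogradov mean value theorem) is the one the paper uses, and your polynomial representation of $1/(g^n-b)$ with the degree bound $D\ll t/\gamma$ parallels the paper's computation of $u_{n\tau_s}$ as a polynomial of degree $\kappa\approx t/s$. But there are two genuine gaps. First, the step $|W_m|\le M^{1-c_1/D^2}$ is not a consequence of the mean value theorem (Lemma~\ref{lem:ford}) alone: passing from a mean value bound to an individual Weyl sum bound requires additional shifting/double-sum machinery, which is precisely what you have skipped. The paper never bounds a single Weyl sum; it shifts $n\mapsto n+\tau_s xy$ with $x,y\le 2^s$ (Lemma~\ref{lemma:sum}), obtains a polynomial in the product $xy$, and applies Korobov's double-sum estimate (Lemma~\ref{lem:Kor}), which is where $N_{k,\kappa}(M)$ actually enters. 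Your single-variable decomposition $n=m+kK$ leaves you needing an explicit single-Weyl-sum estimate that you have neither derived nor correctly attributed, and deriving it would force you back into a double-sum construction of exactly the paper's type.

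Second, and more seriously, your Diophantine hypothesis is attached to the wrong coefficient. By your own valuation count, the monomial $k^D$ survives modulo $2^t$ only because its $2$-adic weight is just below $t$; consequently the leading coefficient of $hP_m(k)/2^t$ has denominator at most $2^{\gamma-1}$ --- and possibly as small as $2$ --- so it cannot be a ``reduced fraction with a large power-of-two denominator'' and cannot carry the Weyl savings. The savings must come from a block of intermediate coefficients (in the paper, those with $(\kappa+1)/2\le\ell<\kappa$), whose denominators provably lie in a middle range $2^{s(\kappa-1-\ell)}<q_\ell\le\kappa!\,2^{\ell s}$; this is exactly what the product $\prod_\ell\min\bigl\{M^\ell,\sqrt{q_\ell}+M^\ell/\sqrt{q_\ell}\bigr\}$ in Lemma~\ref{lem:Kor} is designed to exploit. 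Establishing the required \emph{lower} bounds on these $q_\ell$ amounts to computing $\v_2(a_\ell)=\v_2(\kappa!/\ell!)$ exactly, which the paper does via the binomial and multinomial identities of Lemmas~\ref{lemma:binom} and~\ref{lemma:multinomial} (an inclusion--exclusion argument showing a certain coefficient sum is odd). You flag precisely this valuation tracking as ``the principal obstacle'' and leave it unresolved, so the argument as written does not close.
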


From a sequence $(u_n)$ defined by~\eqref{eq:seq} we derive the \emph{inversive congruential pseudorandom numbers with modulus $2^t$}:
$$ 
u_{L}/2^t, u_{L+1}/2^t, \ldots, u_{L+N-1}/2^t \in[0,1).
$$
The \emph{discrepancy} $D(L,N)$ of these numbers is defined by
$$
D(L,N)=\sup_{J\subset[0,1)}\left|\frac{A(J,N)}{N}-|J| \right|,
$$
where the supremum is taken over all subintervals $J$ of $[0,1)$, $A(N,J)$ is the number of point $u_n/2^t$ in $J$ for $L\leq n<L+N$, 
and $|J|$ is the length of $J$. The   \emph{Erd{\H o}s--Tur{\'a}n inequality} (see~\cite[Theorem~1.21]{DrTi})
  allows us to give an upper bound on the discrepancy $D(L,N)$ in terms of $S_h(L,N)$.

\begin{theorem}\label{thm:discrepancy}
Let $(u_n)$ be as in Theorem~\ref{thm:main} and assume that  $2^{32\beta} < N \leq \tau$. 
Then we have
$$
 D(L,N) \leq c_0  N^{-\eta_0 \rho ^2}
$$
where $\rho$ is given by~\eqref{eq:rho}, 
for some constants $c_0, \eta_0>0$. 
\end{theorem}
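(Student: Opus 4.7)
I would deduce Theorem~\ref{thm:discrepancy} from Theorem~\ref{thm:main} via a standard application of the Erd\H{o}s--Tur\'an inequality \cite[Theorem~1.21]{DrTi}, which for any integer $H \ge 1$ gives
$$
D(L,N) \le c_1 \left(\frac{1}{H} + \frac{1}{N}\sum_{h=1}^{H}\frac{1}{h}\left|S_h(L,N)\right|\right).
$$
A natural choice is $H = \lfloor N^{\eta_0 \rho^2}\rfloor$ for some $\eta_0$ slightly smaller than the constant $\eta$ of Theorem~\ref{thm:main}, so that the first term is already of the desired size and the remaining task is to bound the weighted exponential sum on the right.

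To handle $|S_h(L,N)|$ for general $h \in \{1,\ldots,H\}$, I would write $h = 2^j h_1$ with $\gcd(h_1,2)=1$ and observe that
$$
\e\(h u_n/2^t\) = \e\(h_1 u_n/2^{t-j}\),
$$
so the sum coincides with the corresponding exponential sum for the reduced sequence $u_n \bmod 2^{t-j}$, which is itself of the form~\eqref{eq:seq} modulo $2^{t-j}$, with the \emph{same} parameter $\beta$ whenever $t-j>\beta$, and with length parameter $\rho_j = (\log N)/(t-j) \ge \rho$. Hence Theorem~\ref{thm:main}, applied at modulus $2^{t-j}$ and frequency $h_1$, yields $|S_h(L,N)| \le c N^{1-\eta \rho_j^2} \le c N^{1-\eta \rho^2}$, uniformly for such $h$.

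Two boundary regimes need separate attention. When $N$ exceeds the multiplicative order $\tau_j$ of $g$ modulo $2^{t-j}$, the reduced sequence is periodic of period $\tau_j$, and one decomposes $N = q\tau_j + r$, applying Theorem~\ref{thm:main} over complete periods and bounding the remainder trivially. When $t-j \le \beta$ the hypothesis of Theorem~\ref{thm:main} fails; but then $h \ge 2^{t-\beta}$, and using the trivial bound $|S_h|\le N$ these terms contribute at most $H \cdot 2^{\beta - t}$ to the right-hand side of the Erd\H{o}s--Tur\'an inequality, which is negligible thanks to $2^{32\beta} < N \le \tau < 2^t$. Summing the main bound against the harmonic weight $1/h$ produces an extra factor $\log H$, absorbed into the slightly smaller constant $\eta_0$. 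The principal obstacle is the bookkeeping of these reductions of modulus and of the exceptional regimes; the constant $32 = 4\cdot 8$ in the hypothesis is chosen precisely to absorb all the slack introduced by the logarithmic weight and by the condition $2^{8\beta} < N$ of Theorem~\ref{thm:main} applied at the various smaller moduli.
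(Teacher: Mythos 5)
Your overall route is the same as the paper's: apply Erd\H{o}s--Tur\'an, then for each $h=2^jh_1$ with $h_1$ odd pass to the reduced sequence modulo $2^{t-j}$ and invoke Theorem~\ref{thm:main} there, with the hypothesis $2^{32\beta}<N$ guaranteeing that the reduced exponent $t-j$ stays well above $\beta$. (The paper takes $H=\lfloor \tau_t/\sqrt N\rfloor$ rather than your $H=\lfloor N^{\eta_0\rho^2}\rfloor$; either choice makes the $1/H$ term harmless.) However, one step fails as written: in the regime $N>\tau_{t-j}$ you propose to write $N=q\tau_{t-j}+r$, apply Theorem~\ref{thm:main} to the $q$ complete periods, and bound the leftover segment of length $r$ \emph{trivially}. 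The trivial bound on that segment is $r$, and $r$ can be a positive proportion of $N$ (take $q=1$ and $r$ close to $\tau_{t-j}$, so $r\approx N/2$); then your estimate for $S_h(L,N)$ degenerates to $\gg N$ and no power saving survives. Since $N$ may be as large as $\tau=\tau_t$, already $j=1$ puts you in this regime, so it is not a corner case.

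The repair --- and what the paper does --- is to apply Theorem~\ref{thm:main} to the leftover segment as well (it has length $K\le\tau_{t-j}$, so the theorem applies whenever $K>2^{8\beta}$), reserving the trivial bound for the case $K\le 2^{8\beta}$, where $2^{8\beta}$ is genuinely negligible against $\tau_{t-j}^{1-\varepsilon}$. Note also that in the periodic regime the saving per block is $\tau_{t-j}^{-\eta(\log\tau_{t-j})^2/(t-j)^2}$ rather than $N^{-\eta\rho_j^2}$, so one still needs the small computation
$$
\frac{(\log\tau_{t-j})^3}{(t-j)^2\log N}\ \ge\ \frac{\rho^2}{8},
$$
which rests on $t-j\ge \tfrac12\log_2 N+\beta$; your observation that $\rho_j\ge\rho$ settles only the case $N\le\tau_{t-j}$. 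With these two adjustments your argument closes exactly as in the paper.
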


Writing 
$$
N^{-\rho ^2} = \exp\(-\frac{(\log N)^3}{t^2} \)
$$
we see that Theorems~\ref{thm:main} and~\ref{thm:discrepancy} are nontrivial in the range~\eqref{eq:MS range}.

\section{Preparation}

\subsection{Notation}
We recall that the notations $U\ll V$, and $V\gg U$ are 
equivalent to the statement that the inequality $|U |\leq cV$ holds with some
absolute constant $c>0$. 
 
We use the notation $\v_2$ to the 2-adic valuation, that is, for non-zero integers $a\in\Z$ we let $\v_2(a)=k$ if $2^k$ is the highest power of 2 which divides $a$, and $\v_2(a/b)=\v_2(a)-\v_2(b)$ for $a,b\neq 0$.

\subsection{Multiplicative order of integers}

The following assertion describes the order of elements modulo   powers of $2$. 

\begin{lemma}\label{lemma:order}
Let $g\neq \pm 1$ be an odd integer and write
$$
g^2=1+w_\beta 2^\beta, \quad \gcd (w_\beta,2)=1.
$$
Then for $s\geq \beta$ the multiplicative order $\tau_s$ of $g$ modulo $2^s$ is $ \tau_s=2^{s-\beta +1}$ and 
\begin{equation}\label{eq:order}
g^{\tau_s}=1+w_s2^{s}, \quad \gcd (w_s,2)=1.
\end{equation}
\end{lemma}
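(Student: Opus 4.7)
The plan is to prove both assertions simultaneously by induction on $s\ge\beta$. Before starting I would first verify the preliminary fact that $\beta\ge 3$. Indeed, since $g$ is odd with $g\ne\pm 1$, the factorisation $g^2-1=(g-1)(g+1)$ is a product of two consecutive even integers, exactly one of which is divisible by $4$; hence $\v_2(g^2-1)\ge 3$, i.e.\ $\beta\ge 3$. This is what guarantees $s\ge 3$ throughout the induction and makes the binomial exponent arithmetic safe.

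The base case $s=\beta$ is immediate from the hypothesis $g^2=1+w_\beta 2^\beta$ with $w_\beta$ odd: this shows $g^2\equiv 1\pmod{2^\beta}$, so the order $\tau_\beta$ divides $2$. We cannot have $\tau_\beta=1$ because $g\equiv 1\pmod{2^\beta}$ would force $\v_2(g^2-1)\ge 2\beta>\beta$, contradicting $\v_2(g^2-1)=\beta$. Thus $\tau_\beta=2=2^{\beta-\beta+1}$, and \eqref{eq:order} reduces to the given identity.

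For the inductive step, assume \eqref{eq:order} holds at level $s$, so $g^{\tau_s}=1+w_s 2^s$ with $w_s$ odd. Square this:
$$
g^{2\tau_s}=(1+w_s 2^s)^2=1+w_s 2^{s+1}+w_s^2\,2^{2s}
=1+\bigl(w_s+w_s^2\,2^{s-1}\bigr)2^{s+1}.
$$
Because $s\ge 3$, we have $s-1\ge 1$, so $w_s+w_s^2\,2^{s-1}$ is odd, which establishes \eqref{eq:order} at level $s+1$ with $w_{s+1}:=w_s+w_s^2\,2^{s-1}$. In particular, $g^{2\tau_s}\equiv 1\pmod{2^{s+1}}$, so $\tau_{s+1}\mid 2\tau_s$. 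On the other hand $\tau_{s+1}$ must be a multiple of $\tau_s$ (since reducing the congruence $g^{\tau_{s+1}}\equiv 1\pmod{2^{s+1}}$ modulo $2^s$ forces $\tau_s\mid\tau_{s+1}$), while $g^{\tau_s}=1+w_s 2^s\not\equiv 1\pmod{2^{s+1}}$ rules out $\tau_{s+1}=\tau_s$. Hence $\tau_{s+1}=2\tau_s=2^{s+1-\beta+1}$, completing the induction.

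The main difficulty, such as it is, is purely bookkeeping: one needs the inequality $2s\ge s+2$ in order to factor $2^{s+1}$ out of the squared expression, and this is precisely what the preliminary observation $\beta\ge 3$ (hence $s\ge 3$) delivers. No deeper input is needed.
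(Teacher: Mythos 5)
Your proof is correct and follows essentially the same route as the paper: induction on $s$, squaring $g^{\tau_s}=1+w_s2^s$ to pass from level $s$ to level $s+1$. You are in fact more complete than the paper, which only carries out the induction for \eqref{eq:order} and leaves the deduction of $\tau_s=2^{s-\beta+1}$ implicit, whereas you spell out the divisibility argument $\tau_s\mid\tau_{s+1}\mid 2\tau_s$ with $\tau_{s+1}\neq\tau_s$; you also sharpen the paper's unproved remark $\beta\ge 2$ to $\beta\ge 3$ with justification. One small slip: in the base case, $g\equiv 1\pmod{2^\beta}$ gives $\v_2(g^2-1)=\v_2(g-1)+\v_2(g+1)\ge \beta+1$, not $\ge 2\beta$ (since $\v_2(g+1)=1$ when $\beta\ge 2$); the correct bound $\beta+1>\beta$ still yields the contradiction, so the argument stands.
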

\begin{proof} First we note that $\beta\ge 2$.  We prove~\eqref{eq:order} by induction of $s$.

Clearly, we have~\eqref{eq:order} with $s=\beta$. Furthermore, 
if~\eqref{eq:order} holds for some $s\geq \beta$, then by squaring it we get
$$
g^{2 \tau_s}=1+w_s2^{s+1}+w_s^22^{2s+2}=1+w_{s+1}2^{s+1} , 
$$
with $w_{s+1}=1+w_s2^{s-1} \equiv 1 \bmod 2$. Hence~\eqref{eq:order} also holds with $s+1$
in place of $s$. 
\end{proof}

\subsection{Explicit form of the Vinogradov mean value theorem}

Let $N_{k,n}(M)$ be the number of integral solutions of the system of equations
\begin{align*}
 x_1^j+\ldots+x_k^j&=y_1^j+\ldots+y_k^j, \qquad j =1, \ldots, n,\\
 1\leq x_i&,y_i\leq M,  \qquad i =1, \ldots, k.
\end{align*}

Our application of Lemma~\ref{lem:Kor} below rests on a version of the 
Vinogradov mean value theorem which  gives a precise bound on $N_{k,n}(M)$.
We use its fully explicit 
version given by Ford~\cite[Theorem~3]{Ford}, which we present 
here in   the following weakened and simplified form. 

\begin{lemma}
\label{lem:ford}
For every integer $n\ge 129$ there exists an integer
$k\in[2n^2,4n^2]$ such that for any integer $M\ge 1$ we have
$$
N_{k,n}(M)\le n^{3n^3}M^{2k-0.499n^2}. 
$$
\end{lemma}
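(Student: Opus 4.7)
The plan is to obtain the stated inequality as a direct specialization of Ford's Theorem~3 in~\cite{Ford}. That theorem gives, for a wide range of admissible pairs $(k,n)$, a bound of the shape
$$
N_{k,n}(M) \le D(k,n)\, M^{2k - n(n+1)/2 + \eta(k,n)},
$$
where both the constant $D(k,n)$ and the correction term $\eta(k,n)$ are given by completely explicit expressions arising from Ford's iterative descent, and $\eta(k,n)$ can be made arbitrarily small by taking $k$ sufficiently large compared with $n^2$. What remains is to extract from this refined statement the weakened form asked for in the lemma.

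First I would select a specific $k$ in the target interval $[2n^2,\, 4n^2]$ for which Ford's formula guarantees a correction $\eta(k,n) \le 0.001\, n^2$. Writing $n(n+1)/2 = 0.5 n^2 + 0.5 n$, this ensures that $n(n+1)/2 - \eta(k,n) \ge 0.499\, n^2$, which yields the exponent $2k - 0.499\, n^2$ demanded in the lemma. The hypothesis $n \ge 129$ enters precisely here: it is what allows us to pick $k$ in the window $[2n^2, 4n^2]$ that is simultaneously large enough for Ford's descent to deliver such a small $\eta(k,n)$ and small enough to still lie within that window.

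Next I would control the leading constant $D(k,n)$. In Ford's formulation, $D(k,n)$ is a product of factorials in $n$ together with polynomial factors in $k$ and $n$; for $k \le 4n^2$ a straightforward estimate yields $\log D(k,n) = O(n^3 \log n)$. This bound is comfortably absorbed into $n^{3n^3}$ once $n$ is at least a modest absolute threshold, which is again ensured by $n \ge 129$.

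The main (and essentially the only) obstacle is the careful bookkeeping required to translate Ford's statement---which is parameterized by the depth of his descent and involves a small loss at each iteration---into the clean and uniform form stated here. No new ideas beyond Theorem~3 of~\cite{Ford} are needed; as the authors emphasize, the present lemma is a convenient repackaging rather than a fresh result.
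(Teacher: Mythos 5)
Your proposal matches the paper's treatment: the authors give no independent proof of this lemma, presenting it explicitly as a ``weakened and simplified form'' of Ford's Theorem~3, exactly the specialization-plus-bookkeeping you describe (choosing $k$ in the window so the exponent loss is at least $0.499n^2$ and absorbing Ford's explicit constant into $n^{3n^3}$). Your outline is consistent with that and adds nothing that conflicts with the paper.
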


We note that the recent striking advances  in the Vinogradov mean value theorem
due to 
Bourgain, Demeter and Guth~\cite{BDG} and Wooley~\cite{Wool}  are not suitable
for our purposes here as they contain implicit constants
that depend on $k$ and $n$, while in our approach   $k$ and $n$
grow together  with $M$.

\subsection{Double exponential sums with polynomials}

Our main tool to bound the exponential sum $S_h(L,N)$ is the following 
result of Korobov~\cite[Lemma~3]{Kor}.

\begin{lemma}\label{lem:Kor}
Assume that
$$
\left| \alpha_\ell -\frac{a_\ell}{q_\ell} \right| \le \frac{1}{q_\ell^2} \mand \gcd(a_\ell,q_\ell)=1, 
$$
for some real $\alpha_\ell$ and  integers $a_\ell,q_\ell$, $\ell=1,\ldots, n$. Then
for the sum
$$
S=\sum_{x,y=1}^M \e\(\alpha_1xy+\ldots+\alpha_n x^ny^n \)
$$
we have 
\begin{align*}
|S|^{2k^2}\leq \(64k^2\log(3Q)\)^{n/2} &
M^{4k^2-2k}N_{k,n}(M)\\
& \prod_{\ell=1}^n \min\left\{M^\ell,\sqrt{q_\ell}+\frac{M^\ell}{\sqrt{q_\ell}} \right\},
\end{align*}
where 
$$
Q=\max\{q_\ell~:~1\le \ell \le n\}.
$$
\end{lemma}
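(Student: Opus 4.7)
The plan is to transform $|S|^{2k^2}$ into a Vinogradov-type average of Weyl sums through two H\"older/Cauchy--Schwarz steps, and then invoke the rational-approximation hypothesis to bound the resulting one-dimensional exponential sums.

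First, applying H\"older in $x$ and expanding the $2k$-th power of the inner $y$-sum gives
\[
|S|^{2k}\le M^{2k-1}\sum_{T\in\Z^{n}}J(T)\,W(T),
\]
where $J(T)$ counts the $2k$-tuples $(y_1,\dots,y_{2k})\in[1,M]^{2k}$ with $\sum_{i\le k}y_i^j-\sum_{i>k}y_i^j=T_j$ for each $j$, and $W(T)=\sum_{x=1}^{M}\e\(\sum_j\alpha_j T_j x^j\)$. Raising to the $k$-th power, one round of H\"older on the $T$-sum (with exponent $k$) followed by a Cauchy--Schwarz step yields
\[
|S|^{2k^2}\le M^{4k^2-2k}\Bigl(\sum_{T}J(T)|W(T)|^{2k}\Bigr)^{1/2}.
\]
The diagonal bound $J(T)\le J(0)=N_{k,n}(M)$ then produces $|S|^{2k^2}\le M^{4k^2-2k}N_{k,n}(M)^{1/2}\bigl(\sum_{T}|W(T)|^{2k}\bigr)^{1/2}$.

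Next, I would estimate $\sum_T|W(T)|^{2k}$ by expanding the $2k$-th power into a sum over $(x_1,\dots,x_{2k})$, swapping orders, and evaluating the resulting geometric progressions over $T_j\in[-kM^j,kM^j]$. Since these sums factor across $j$, one is left with
\[
\sum_T|W(T)|^{2k}\ll N_{k,n}(M)\prod_{\ell=1}^{n}\sum_{|u|\le kM^\ell}\min\!\Bigl(kM^\ell,\frac{1}{\|\alpha_\ell u\|}\Bigr),
\]
after bounding by $N_{k,n}(M)$ the number of $x$-tuples producing a given $u=(u_1,\ldots,u_n)$ with $u_j=\sum_{i\le k}x_i^j-\sum_{i>k}x_i^j$. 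Each inner sum is then handled by the standard divisor-type estimate using $|\alpha_\ell-a_\ell/q_\ell|\le 1/q_\ell^2$, giving a bound of order $\log(3Q)\,\min(M^\ell,\sqrt{q_\ell}+M^\ell/\sqrt{q_\ell})^{2}$ per factor. Combining with the square-root step yields the promised product $\prod_\ell\min(M^\ell,\sqrt{q_\ell}+M^\ell/\sqrt{q_\ell})$, while the $n$ logarithmic factors collect into $(64k^2\log 3Q)^{n/2}$.

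The main technical obstacle is bookkeeping. The exponent $2k^2$ (rather than $2k$ or $4k^2$) forces the intermediate H\"older to be applied with exponent exactly $k$; and since $N_{k,n}(M)$ is extracted twice as a square-root factor---once from $J(T)\le J(0)$ and once from the Weyl average over $T$---these two contributions must combine cleanly to the single $N_{k,n}(M)$ of the statement. Any imprecise balancing of the H\"older exponents would introduce an extraneous power of $M$ or an extra factor of $N_{k,n}(M)$ that cannot subsequently be removed.
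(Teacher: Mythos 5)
The paper offers no proof of this lemma: it is quoted directly from Korobov~\cite[Lemma~3]{Kor}, so the only thing to compare your attempt against is the original argument, which your sketch reconstructs correctly. The steps you outline --- H\"older in $x$, then H\"older with exponent $k$ plus Cauchy--Schwarz on the $T$-sum to reach the exponent $2k^2$ and the factor $M^{4k^2-2k}$, the diagonal bound $J(T)\le J(0)=N_{k,n}(M)$, completion of $\sum_T|W(T)|^{2k}$ over the full box $|T_\ell|\le kM^\ell$ (legitimate since the summands are nonnegative), and the standard estimate for $\sum_{|u|\le kM^\ell}\min\bigl(kM^\ell,\|\alpha_\ell u\|^{-1}\bigr)$ under the rational approximation hypothesis --- are exactly Korobov's, and the bookkeeping you worry about does close up as you describe, with the two half-powers of $N_{k,n}(M)$ combining and the per-factor $k^2\log(3Q)$ losses collecting into $(64k^2\log(3Q))^{n/2}$ after the final square root.
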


We also need the following simple result which allows us to reduce single sums 
to double sums. 

\begin{lemma}\label{lemma:sum}
 Let $f:\mathbb{R}\rightarrow\mathbb{R}$ be an arbitrary function. Then for any  integers $M,N\ge 1$ and $a\ge 0$, we have
 $$
 \left|\sum_{x=0}^{N-1}\e(f(x)) \right|\leq \frac{1}{M^2}\sum_{x=0}^{N-1}\left|\sum_{y,z=1}^{M}\e(f(x+ayz))\right|+2 a M^2.
 $$
\end{lemma}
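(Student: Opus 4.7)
The plan is to use a completion/shifting argument: for each pair $(y,z)\in\{1,\dots,M\}^2$ the sum $\sum_{x=0}^{N-1}\e(f(x+ayz))$ is just a shifted version of $S:=\sum_{x=0}^{N-1}\e(f(x))$, which differs from $S$ by only boundary terms of size at most $2ayz\le 2aM^2$. Averaging over $y,z$ then produces the double inner sum, and switching the order of summation yields the stated inequality.

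In more detail, first I would fix $y,z\in\{1,\dots,M\}$ and write
$$
\sum_{x=0}^{N-1}\e(f(x+ayz))-\sum_{x=0}^{N-1}\e(f(x))=\sum_{x=N}^{N-1+ayz}\e(f(x))-\sum_{x=0}^{ayz-1}\e(f(x)),
$$
so that, by the triangle inequality, $\bigl|\sum_{x=0}^{N-1}\e(f(x+ayz))-S\bigr|\le 2ayz\le 2aM^{2}$. Writing this as
$$
\sum_{x=0}^{N-1}\e(f(x+ayz))=S+\varepsilon_{y,z},\qquad |\varepsilon_{y,z}|\le 2aM^{2},
$$
I then sum over $y,z=1,\dots,M$ to obtain
$$
M^{2}S=\sum_{y,z=1}^{M}\sum_{x=0}^{N-1}\e(f(x+ayz))-\sum_{y,z=1}^{M}\varepsilon_{y,z}.
$$

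Swapping the order of summation in the first term and applying the triangle inequality yields
$$
M^{2}|S|\le \sum_{x=0}^{N-1}\Bigl|\sum_{y,z=1}^{M}\e(f(x+ayz))\Bigr|+M^{2}\cdot 2aM^{2},
$$
and dividing through by $M^{2}$ gives the lemma. There is no real obstacle here: the only point that requires a little care is the order in which absolute values are placed (one must put the absolute value \emph{after} swapping the summations over $x$ and $(y,z)$, not before, since the statement has the $y,z$-sum inside the $x$-sum), and the bookkeeping of the boundary error $2ayz$ which is uniformly controlled by $2aM^{2}$.
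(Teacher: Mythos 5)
Your argument is correct and is essentially identical to the paper's own proof: both compare the shifted sum $\sum_{x}\e(f(x+ayz))$ with the original sum via the $2ayz$ boundary-term bound, sum over $y,z$ to accumulate an error of at most $2aM^4$, and then swap the order of summation before applying the triangle inequality. The only cosmetic difference is that the paper bounds the total error by $2a\sum_{y,z}yz\le 2aM^4$ while you bound each term uniformly by $2aM^2$; both give the same final estimate.
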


\begin{proof} Examining the non-overlapping parts of the sums below, we see that for any positive integers $y$ and $z$
$$
 \left|\sum_{x=0}^{N-1}\e(f(x)) - \sum_{x=0}^{N-1} \e(f(x+ayz))\right|  \le 2ayz.
$$
Hence 
$$
 \left|M^2 \sum_{x=0}^{N-1}\e(f(x)) - \sum_{y,z=1}^{M} \sum_{x=0}^{N-1} \e(f(x+ayz))\right|  \le 2a\sum_{y,z=1}^{M} yz \le 2 a M^4.
$$
Changing the order of summation and using the triangle inequality, 
 the result  follows. 
\end{proof}

\subsection{Sums of binomial coefficients}

We need results of certain sums of binomial coefficients. The first ones are immediate and we leave the proof for the reader.

\begin{lemma}\label{lemma:binom}   Let $n$  be a positive integer. Then 
\begin{enumerate}
 \item for any integer $k\leq n$ we have
 $$
 \sum_{i=k}^n \binom{i}{k}=\binom{n+1}{k+1};
 $$
 \item  for any  polynomial $P(X)\in\ZZ[X]$ of degree $\deg P<n$ we have
$$
\sum_{j=0}^n(-1)^j\binom{n}{j}P(j)=0.
$$
\end{enumerate}
\end{lemma}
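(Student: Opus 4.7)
\medskip
\noindent\textbf{Proof proposal for Lemma~\ref{lemma:binom}.}

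Both parts are classical and the authors leave them to the reader, so the plan is just to record clean arguments, not to search for anything deep.

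For part~(1), I would proceed by induction on $n\ge k$. The base case $n=k$ reduces to $\binom{k}{k}=1=\binom{k+1}{k+1}$. For the inductive step I would invoke Pascal's rule in the form $\binom{n+1}{k+1}=\binom{n}{k+1}+\binom{n}{k}$, so that, splitting off the top term,
$$
\sum_{i=k}^{n+1}\binom{i}{k}=\binom{n+1}{k}+\sum_{i=k}^{n}\binom{i}{k}
=\binom{n+1}{k}+\binom{n+1}{k+1}=\binom{n+2}{k+1},
$$
using the induction hypothesis in the middle step. This closes the induction. (A one-line combinatorial alternative is to observe that both sides count the $(k+1)$-subsets of $\{0,1,\dots,n\}$, classified by their largest element $i$.)

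For part~(2), the cleanest approach is to recognise the left-hand side, up to the sign $(-1)^n$, as the $n$-th forward difference of $P$ evaluated at $0$. Defining $\Delta f(x)=f(x+1)-f(x)$ and iterating, one has the standard identity
$$
\Delta^n P(0)=\sum_{j=0}^n(-1)^{n-j}\binom{n}{j}P(j),
$$
so it suffices to show $\Delta^n P\equiv 0$ whenever $\deg P<n$. Since $\Delta$ is linear and lowers the degree of every non-constant polynomial by exactly one (because $(x+1)^d-x^d=dx^{d-1}+\text{lower order}$ for $d\ge 1$), an easy induction on $\deg P$ shows that $\Delta^{\deg P+1}P=0$. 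Multiplying the resulting identity by $(-1)^n$ gives exactly the claimed vanishing. By linearity of both sides in $P$, it is in fact enough to check the statement on the monomials $P(X)=X^d$ with $d<n$, which reduces part~(2) to a single well-known finite-difference computation.

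Neither part presents a genuine obstacle: part~(1) is a bookkeeping induction using Pascal's rule, and the only thing to be careful about in part~(2) is the sign convention in converting between $(-1)^j$ and $(-1)^{n-j}$, which is harmless since $(-1)^n$ is a global factor.
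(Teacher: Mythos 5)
Your proof is correct; the paper itself gives no argument for this lemma (it explicitly leaves it to the reader as immediate), and your inductive/Pascal's-rule argument for part (1) and the finite-difference argument for part (2) are exactly the standard routes one would expect, with the sign bookkeeping in part (2) handled correctly.
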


\begin{lemma}\label{lemma:multinomial}
For any $n,k$ with $k\leq n$ we have
 $$
 \sum_{\substack{\ell_1+\ldots+\ell_k=n\\ \ell_1,\ldots, \ell_k\geq 1}}\frac{n!}{\ell_1!\ldots\ell_k!}=\sum_{i=0}^k (-1)^{k-i} \binom{k}{i}i^n.
 $$
\end{lemma}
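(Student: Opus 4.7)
The plan is to establish the identity via exponential generating functions, by expanding $(e^x-1)^k$ in two different ways and comparing the coefficients of $x^n$.

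First, applying the binomial theorem to $(e^x-1)^k$ treating $e^x$ and $-1$ as the two summands, and then expanding each $e^{ix}$ as a power series, I obtain
$$
(e^x-1)^k=\sum_{i=0}^k\binom{k}{i}(-1)^{k-i}e^{ix}=\sum_{n\ge 0}\frac{x^n}{n!}\sum_{i=0}^k(-1)^{k-i}\binom{k}{i}i^n,
$$
so that the coefficient of $x^n/n!$ in $(e^x-1)^k$ is precisely the right-hand side of the claimed identity.

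Second, I would write $e^x-1=\sum_{\ell\ge 1}x^\ell/\ell!$ and multiply out the $k$-fold product:
$$
(e^x-1)^k=\left(\sum_{\ell\ge 1}\frac{x^\ell}{\ell!}\right)^{\!k}=\sum_{\ell_1,\ldots,\ell_k\ge 1}\frac{x^{\ell_1+\cdots+\ell_k}}{\ell_1!\cdots\ell_k!}.
$$
Collecting terms with $\ell_1+\cdots+\ell_k=n$ (which forces $n\ge k$) and reading off the coefficient of $x^n/n!$, one obtains the left-hand side of the identity.

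Equating the two expressions for this coefficient yields the result. Combinatorially, both sides count the surjections $f:\{1,\ldots,n\}\to\{1,\ldots,k\}$: the right-hand side arises from inclusion--exclusion over the subsets of $\{1,\ldots,k\}$ missed by the image, while the left-hand side sorts surjections by the fibre cardinalities $\ell_j=|f^{-1}(j)|$. There is no real obstacle; this is a classical identity, and the only decision is whether to present it algebraically (as above) or combinatorially. I would favour the generating-function argument since it is short, self-contained, and does not require Lemma~\ref{lemma:binom}.
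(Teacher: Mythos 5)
Your proof is correct, but it takes a different route from the paper. The paper's argument is a two-line combinatorial one: it starts from the unrestricted multinomial identity
$$
\sum_{\ell_1+\ldots+\ell_k=n}\frac{n!}{\ell_1!\ldots\ell_k!}=k^n
$$
(counting all maps from an $n$-set to a $k$-set) and then applies inclusion--exclusion over which of the $k$ parts are empty to pass to the restricted sum with all $\ell_j\geq 1$; this is exactly the surjection count you mention in your closing remark, but the paper makes it the whole proof rather than an aside. Your main argument instead extracts the coefficient of $x^n/n!$ from $(e^x-1)^k$ in two ways, which is an equally standard and fully rigorous derivation at the level of formal power series; it has the advantage of being entirely mechanical, with no inclusion--exclusion bookkeeping left to the reader, at the cost of introducing generating functions for what the paper treats as a purely finite counting statement. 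Either version suffices for the application in Section~3, where only the parity of the resulting sum matters; and you are right that neither route needs Lemma~\ref{lemma:binom}, which the paper likewise does not invoke here.
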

\begin{proof}
As
$$
\sum_{\substack{\ell_1+\ldots+\ell_k=n}}\frac{n!}{\ell_1!\ldots\ell_k!}=k^n
$$
the result follows directly from the inclusion--exclusion principle.
\end{proof}

\section{Proofs of the main results} 

\subsection{Proof of Theorem~\ref{thm:main}}
\label{sec:proof exp}
As
$$
u_{n+L}=\frac{a}{g^{n+L}-b}+c=\frac{ag^{-L}}{g^{n}-bg^{-L}}+c,
$$
we can assume, that $L=0$ and we put
$$
S_h(0,N)=S_h(N).
$$
We can also assume, that $a=1$ and $c=0$. Finally we assume, that
$$
N\geq 2^{6t^{1/2}}
$$
since otherwise the result is trivial, see~\eqref{eq:MS range}.

Define
$$
r=\frac{t\log 2}{\log N} = \rho^{-1} \log 2, 
$$
 where $\rho$ is given by~\eqref{eq:rho}.
First assume, that
$$
r\geq 129
$$
and put
$$
s=\left \lfloor\frac{t}{4r} \right \rfloor \quad \text{and} \quad \kappa=\left \lceil \frac{t}{s}\right \rceil-1.
$$
Then
$$
s>\beta, \quad 2^s \leq N^{1/4}, \quad r\leq \kappa <s, 
$$
if $N$ is large enough. Indeed,
$$
s\geq \frac{t}{4r}-1=\frac{\log N}{4 \log 2}-1\geq 2 \beta -1>\beta \quad \text{and} \quad 2^s\leq 2^{\frac{t}{4r}}=N^{1/4}.
$$
Moreover,
$$
\kappa \geq \frac{t}{s}-1\geq 4r-1\geq r
$$
and
$$
\kappa \leq \frac{t}{s}\leq \frac{(\log N)^2}{36 (\log 2)^2 s}=\frac{t^2}{36 r^2 s}\leq s.
$$

Let $\tau_s$ be the order of $g$ modulo $2^s$. As $s>\beta$, 
$$
 g^{\tau_s}=1+w\cdot 2^s \quad \text{with } \gcd(w,2)=1
$$
by Lemma~\ref{lemma:order}.  
Clearly, for all even $x$, we have
 $$
 \frac{1}{1-x}\equiv 1+x+\ldots + x^{t-1} \bmod 2^t,
 $$
 thus
\begin{align*}
 u_{n\cdot \tau_s}&\equiv \frac{-1}{b-g^{n\cdot \tau_s}}\equiv \frac{-1}{1-(1-b+g^{n\tau_s})}\equiv -\sum_{\ell=0}^{t-1}\(1-b+g^{n\cdot \tau_s}\)^\ell\\
 & \equiv  -\sum_{\ell=0}^{t-1}\(1-b+\(1+w\cdot 2^s \)^n\)^\ell\\
 & \equiv - \sum_{\ell=0}^{t-1}\(2-b+ \sum_{i=1}^n\binom{n}{i}(w\cdot 2^s)^i \)^\ell \bmod 2^t.
 \end{align*}
Define
$$
F_\kappa(n)= \sum_{\ell=0}^{\kappa}(w\cdot 2^s)^\ell\sum_{j=0}^{t-1} \sum_{\nu=1}^j \binom{j}{\nu} (2-b)^{j-\nu} \sum_{\substack{i_1+\ldots+i_\nu=\ell\\ i_1,\ldots, i_\nu\geq 1}}\binom{n}{i_1}\ldots\binom{n}{i_\nu}.
$$
Then
$$
 u_{n\cdot \tau_s}\equiv -F_{\kappa}(n) \bmod 2^t.
$$

The expression $\kappa!F_\kappa(n)$ is a polynomial of $2^sn$ of degree at most $\kappa$. Thus we can define the integers $a_0,\ldots, a_\kappa$ by 
$$
\kappa!F_\kappa(n)=\sum_{\ell=0}^\kappa a_\ell 2^{\ell s} n^\ell.
$$
Then the coefficients satisfy
$$
a_\ell \equiv\frac{\kappa!}{\ell!} w^\ell \sum_{j=1}^{t-1} \sum_{\nu=1}^j \binom{j}{\nu} (2-b)^{j-\nu} \sum_{\substack{i_1+\ldots+i_\nu=\ell\\ i_1,\ldots, i_\nu\geq 1}}\frac{\ell!}{i_1!\ldots i_\nu!} \bmod 2^s.
$$
We have $\v_2(a_\ell)=\v_2(\kappa!/\ell!)$. Indeed, as $w$ is odd and $b$ is even, by 
Lemmas~\ref{lemma:multinomial} and~\ref{lemma:binom} we get
\begin{align*}
\sum_{j=1}^{t-1} &\sum_{\nu=1}^j \binom{j}{\nu} (2-b)^{j-\nu} \sum_{\substack{i_1+\ldots+i_\nu=\ell\\ i_1,\ldots, i_\nu\geq 1}}\frac{\ell!}{i_1!\ldots i_\nu!}\\
& \equiv \sum_{j=1}^{\ell} \sum_{\substack{i_1+\ldots+i_j=\ell\\ i_1,\ldots, i_j\geq 1}}\frac{\ell!}{i_1!\ldots i_j!}
\equiv  \sum_{j=1}^{\ell} \sum_{i=0}^j (-1)^{j-i} \binom{j}{i}i^\ell  \\
&\equiv \sum_{i=0}^\ell(-1)^{i}i^\ell\sum_{j=i}^\ell \binom{j}{i}\equiv\sum_{i=0}^\ell(-1)^{i}i^\ell\binom{\ell+1}{i+1}\\
& \equiv-\sum_{i=1}^{\ell+1}(-1)^{i}\binom{\ell+1}{i}(i-1)^\ell\equiv \binom{\ell+1}{0}(-1)^\ell \equiv1 \bmod 2
\end{align*}
(we note that the last several congruences are actually equations).

Write $\omega_\ell=\v_2(a_\ell)$. Then
$$
\omega_\ell \leq \v_2(\kappa!)\leq \left\lfloor\frac{\kappa}{2} \right\rfloor+\left\lfloor\frac{\kappa}{4} \right\rfloor+\ldots <\kappa
\quad \text{for } \ell<\kappa
$$
and $\omega_\kappa =0$.

To conclude the proof observe, that by Lemma~\ref{lemma:sum} we have
\begin{align*}
 |S_h(N)|&\leq \frac{1}{2^{2s}}\sum_{n=0}^{N-1}\left|\sum_{x,y=1}^{2^{s}}  \e\(\frac{h}{2^t}  u_{n + \tau_s xy} \)  \right| +2\tau_s 2^{2s}\\
 & 
 \leq \frac{1}{2^{2s}}\sum_{n=0}^{N-1}\left|\sum_{x,y=1}^{2^{s}}  \e\( \frac{h}{2^t}\cdot  \frac{g^{-n}}{g^{\tau_s xy}-bg^{-n}} \)  \right| + 2^{3s}\\
 & 
 \leq \frac{1}{2^{2s}}\sum_{n=0}^{N-1}\left|\sum_{x,y=1}^{2^{s}}  \e \(
 \frac{h g^{-n}(a_12^{s}xy+\ldots+a_\kappa2^{\kappa s} (xy)^\kappa)}{\kappa !2^t} 
 \)  \right| + N^{3/4}, \\ 
\end{align*}
where the coefficients $a_\ell=a_\ell(bg^{-n})$ for $\ell=1,\ldots, \kappa$, are determined as above with $bg^{-n}$ instead of $b$.

Write
$$
 \frac{hg^{-n}a_\ell 2^{\ell s}}{\kappa!2^t}=\frac{r_\ell}{q_\ell}, \quad \gcd (r_\ell,q_\ell)=1, \quad \ell=1,\ldots, \kappa, 
$$
with
\begin{equation}\label{eq:q}
 2^{t-\ell s-\omega_\ell}\leq q_\ell\leq \kappa! 2^{t-\ell s-\omega_\ell} \quad \ell=1,\ldots, \kappa.
\end{equation}

Then
\begin{equation}\label{eq:S(K)}
 |S_h(N)|\leq   \frac{1}{2^{2s}}\sum_{n=0}^{N-1}\left|\sum_{x,y=1}^{2^{s}}  \e\(
 f_n(x,y) \)  \right| + N^{3/4}, 
\end{equation}
where
$$
f_n(x,y)=\frac{r_1}{q_1}xy+\ldots+\frac{r_\kappa}{q_\kappa}(xy)^\kappa.
$$

Put
$$
\sigma_n=\sum_{x,y=1}^{2^{s}}  \e\( f_n(x,y) \).
$$

For $\kappa$, there exists a $k\in[2\kappa^2,4\kappa^2]$ such that for $N_{k,\kappa}$ we have the bound of Lemma~\ref{lem:ford}
(with $\kappa$ instead of $n$).

Then by Lemma~\ref{lem:Kor} we have
\begin{equation}\label{eq:s_n}
 \begin{split}
|\sigma_n|^{2k^2} \leq \left(64 k^2 \log(3Q)\right)^{\kappa/2} & 2^{(4k^2-2k)s} N_{k,\kappa}(2^s)\\
& \prod_{\ell=1}^{\kappa}\min\left\{2^{\ell s},\sqrt{q_{\ell}}+\frac{2^{\ell s}}{\sqrt{q_\ell}} \right\},
\end{split}
\end{equation}
where  by~\eqref{eq:q} we have $Q \leq \kappa! 2^t$ and thus
\begin{equation}\label{eq:Q}
\log (3Q) \leq \log(3\kappa! 2^t )\leq  t\kappa   \log(6\kappa).
\end{equation}

By the choice of $\kappa$ we have $s\kappa<t\leq s(\kappa+1)$.
As $\omega_\ell \leq \kappa \leq s$, under
$$
\frac{\kappa+1}{2}\leq \ell < \kappa
$$
we have by~\eqref{eq:q}
$$
q_\ell\leq \kappa!2^{s(\kappa+1-\ell)}\leq \kappa!2^{\ell s} \quad \text{and} \quad q_\ell >2^{s(\kappa-1-\ell)}
$$
thus
$$
\frac{1}{\sqrt{q_\ell}}+\frac{\sqrt{q_\ell}}{2^{\ell s}}\leq \frac{1+\kappa!}{\sqrt{q_\ell}}\leq \kappa^\kappa 2^{-\frac{s}{2}(\kappa-1-\ell)}.
$$
Whence
\begin{equation} 
 \begin{split}
 \label{eq:prod}
   \prod_{\ell=1}^{\kappa}\min\left\{2^{\ell s},\sqrt{q_{\ell}}+\frac{2^{\ell s}}{\sqrt{q_\ell }} \right\}&=2^{s\kappa(\kappa+1)/2} \prod_{\ell=1}^{\kappa}\min\left\{1,\frac{1}{\sqrt{q_\ell}}+\frac{\sqrt{q_\ell}}{2^{\ell s}}\right\}\\
 &   \leq  2^{s\kappa(\kappa+1)/2} \prod_{\frac{\kappa}{2}<\ell<\kappa}\kappa^\kappa 2^{-s (\kappa-1-\ell)/2}\\
 & \leq \kappa^{\kappa^2}2^{s\kappa(\kappa+1)/2- s(\kappa-2)(\kappa-4)/16}.
 \end{split}
\end{equation}
By Lemma~\ref{lem:ford} we have
\begin{equation} 
\label{eq:N(P)}
 N_{k,\kappa}(2^s) \leq \kappa^{3\kappa ^3} 2^{2ks-0.499\kappa ^2 s }. 
\end{equation}
Combining~\eqref{eq:s_n}, \eqref{eq:Q}, \eqref{eq:prod} and~\eqref{eq:N(P)},  we have
\begin{align*}
 |\sigma_n|^{2k^2} &\leq \left(64 t  k^3 \log(6\kappa) \right)^{\kappa/2}
  \kappa^{4\kappa^3} 2^{4k^2s +s\kappa(\kappa +1)/2-
 s (\kappa-2)(\kappa -4)/16-0.499\kappa^2 s}
\end{align*}
and therefore
$$
 |\sigma_n|\ll t^{1/(16 \kappa ^3)} 2^{2s-  s/(32770 \kappa^2)}.
$$

Since $t \kappa^2< (\frac{t}{s})^3s<(6r)^3s$, then
$$
2^{s/\kappa^2}=N^{rs/(t\kappa^2)}>N^{1/(216r^2)}.
$$
Moreover
$$
t^{1/\kappa^{3}}\leq N^{\log t/(129  r^2\log N)}\leq N^{\log \log N/(387 r^2\log N )}, 
$$
whence
$$
 |\sigma_n|\ll  2^{2s} N^{-\eta \rho^2},
$$
for some $\eta>0$ if $N$ is large enough. Thus by~\eqref{eq:S(K)} we have
$$
|S_h(N)|\leq  \frac{1}{2^{2s}}\sum_{n=0}^{N-1}|\sigma_n|+ N^{3/4} \ll N^{1-\eta \rho^2 }+ N^{3/4}\ll N^{1-\eta/r^2}
$$
which gives the result for $r\geq 129$.

If $r<129$, define 
$$
N_0=\fl{2^{t/129}} \qquad \rho_0 = \frac{\log N_0}{t} = \frac{\log 2}{129} + O(1/t).
$$
As $N\leq \tau <2^t$, we have
\begin{equation}\label{eq:M}
\frac{\log N_0}{\log N}> \frac{1}{129}.
\end{equation}
Then
$$
|S_h(N)|\leq \sum_{0\leq k < N/N_0} \left|\sum_{n=kN_0}^{(k+1)N_0-1} \e(hu_n/2^t)\right|.
$$
Applying the previous argument to the inner sums, we get
$$
|S_h(N)|\ll \frac{N}{N_0} N_0^{1-\eta  \rho_0^2}\ll N^{1-129^{-3} \eta  \rho_0^2}
$$
by~\eqref{eq:M}. Thus replacing $\eta$ to $\eta/129^3$, we conclude the proof.

\subsection{Proof of Theorem~\ref{thm:discrepancy}}

%

By the Erd\H{o}s-Tur\'an inequality, see~\cite{DrTi}
for any integer $H\geq 1$ we have
\begin{equation}\label{eq:ET}
 D(L,N)\ll \frac{1}{H} + \frac{2}{N}\sum_{h=1}^H\frac{1}{h}|S_h(L,N)|. 
\end{equation}

Define
$$
H=\left\lfloor \frac{\tau_{t}}{\sqrt{N}} \right\rfloor, 
$$ 
where $\tau_t$ is as in Lemma~\ref{lemma:order}. 

For a given $1\leq h\leq H$, write $h=2^d j$ with odd $j$ and $d\leq \log_2 H$. Then consider the sequence $(u_n)$ modulo $2^{t-d}$. Then clearly
$$
S_h(L,N)=S_{d,j}(L,N).
$$
where $S_{d,j}(L,N)$ is defined as $S_{j}(L,N)$, however with respect to the modulus $ 2^{t-d}$.

By the above choice of parameters, we have
\begin{equation}\label{eq:t-d}
t-d\geq t-\log_2 H \geq  \frac{1}{2}\log_2 N +\beta > 17 \beta
\end{equation}
by Lemma~\ref{lemma:order}, thus
\begin{equation}\label{eq:tau_t-d}
\tau_{t-d}=2^{t-d-\beta+1}.
\end{equation}

Using~\eqref{eq:ET},  we have
\begin{equation}\label{eq:D}
\begin{split}
 D(L,N)& \ll \frac{1}{H} + \frac{1}{N}\sum_{h=1}^H \frac{1}{h}|S_h(L,N)|\\
 &\ll \frac{1}{H}+\frac{1}{N}\sum_{0\leq d \leq \log_2 H}\frac{1}{2^d}
 \sum_{\substack{1\leq j \leq H/2^d\\j \text{ odd}}}\frac{1}{j}|S_{d,j}(L,N)|. 
\end{split}
\end{equation}
For fixed $d$ and $j$ put 
$$
N_d=\left \lceil \frac{N}{\tau_{t-d}}\right\rceil \mand K_d = N-N_d \tau_{t-d} .
$$
Then
\begin{equation}\label{eq:split}
\begin{split}
|S_{d,j}(L,N)|& \leq \sum_{i=0}^{N_d-2} |S_{d,j}(L+i\tau_{t-d} , \tau_{t-d}) |\\
& \qquad \qquad + |S_{d,j}(L+(N_d-1)\tau_{t-d}  ,K_d)| . 
\end{split} 
\end{equation}

If $K_d<2^{8\beta}$, we use the trivial estimate 
$$
|S_{d,j}(L+(N_d-1)\tau_{t-d}  ,K_d)| \leq K_d<2^{8\beta}.
$$
As 
$$
8\beta <\frac{1}{2}(t-d-\beta) 
$$
by~\eqref{eq:t-d},
we get
\begin{equation}\label{eq:K_d}
|S_{d,j}(L+(N_d-1)\tau_{t-d}  ,K_d)| \leq \tau_{t-d}^{1- \eta  (t-d)^{-2}\(\log \tau_{t-d}\)^2} .
\end{equation}
If $K_d\geq 2^{8\beta}$, then  as $K_d\leq \tau_{t-d}$ we also have~\eqref{eq:K_d} by Theorem~\ref{thm:main}. Thus by~\eqref{eq:split} we have
$$
|S_{d,j}(L,N)|\ll N_d \cdot \tau_{t-d}^{1 -\eta  (t-d)^{-2}\(\log \tau_{t-d}\)^2} \ll 
N^{1- \eta  (t-d)^{-2}\(\log \tau_{t-d}\)^2 / \log N }.
$$
By~\eqref{eq:t-d} and~\eqref{eq:tau_t-d}  we have
$$
\frac{(\log \tau_{t-d})^3}{\log N (t-d)^2}=\frac{(t-d-\beta)^3}{\log N (t-d)^2}\geq \frac{(t-d-\beta)^3}{\log N t^2}\geq \frac{1}{8}\left(\frac{\log N}{t}\right)^2 = \rho^2/8 ,  
$$
whence
$$
|S_{d,j}(L,N)|\ll N^{1 - \eta \rho^2/8}.
$$
Then by~\eqref{eq:D},
\begin{align*}
 D(L,N)& \ll \frac{1}{H}+\sum_{0\leq d \leq \log_2 H}\frac{1}{2^d}\sum_{\substack{1\leq j \leq H/2^d\\j \text{ odd}}}\frac{1}{j}N^{-\eta \rho^2/8}\\
    & \ll 2^{-(t-\beta)/2}+N^{-\eta \rho^2/8 } \log H \ll \frac{1}{t}+N^{-\eta \rho^2/8 } \log H  \ll N^{-\eta \rho^2/16}
\end{align*}
if $N$ is large enough.

\section{Comments}

We note that an extension of our results to the case of sequences~\eqref{eq:seq} modulo prime powers $p^t$ 
with a prime $p\ge 3$ is immediate and can be achieved at the cost of merely typographical changes. 

We also note that all implied constants  are effective and can be evaluated (however at the cost of some additional 
technical clutter). 

It is certainly natural to study the multidimensional distribution of  the sequence generated by~\eqref{eq:Traj}, 
that is, the $s$-dimensional  vectors 
$$
(u_n, \ldots, u_{n+s-1}), \qquad n =1, \ldots, N.
$$
Our method is capable of addressing this problem, however investigating the $2$-divisibility of the corresponding 
polynomial coefficients which is an important part of our argument in Section~\ref{sec:proof exp} is 
more difficult and may require new arguments. 

We also use this as an opportunity to pose a question about studying short segments of the inversive generator 
modulo a large prime $p$. While results of Bourgain~\cite{Bou1,Bou2} give  a non-trivial bound
on exponential sums  for very short segments of  sequence
$ag^n \bmod p $, $n=1, \ldots, N$,  see also~\cite[Corollary~4.2]{Gar}, their analogues for even the 
simplest rational expressions like $1/(g^n-b)  \bmod p $ are not known. 
Obtaining such 
results beyond the standard range $N \ge p^{1/2+\varepsilon}$ (with any fixed $\varepsilon>0$) is apparently a 
difficult question requiring new ideas. 

\section*{Acknowledgement}
During the preparation of this  wok L.~M. was partially supported by the Austrian Science Fund FWF Projects P30405 
and  I.~S. by the Australian Research Council Grants DP170100786 and DP180100201.

\end{document}